\newtheorem{thm}{Theorem}[section]
\newtheorem{lem}[thm]{Lemma}
\newtheorem{prop}[thm]{Proposition}
\theoremstyle{definition}
\newtheorem{rem}[thm]{Remark}
\newtheorem{qu}[thm]{Question}
\newtheorem{con}[thm]{Conjecture}
\numberwithin{equation}{section}
\newcommand{\set}[1]{\left\{#1\right\}}
\newcommand{\pr}[1]{\left(#1\right)}
\newcommand{\valu}[1]{\left\langle #1\right\rangle}
\newcommand{\ra}[2]{#1\rightarrow #2}
\newcommand{\ma}[2]{#1\mapsto #2}
\begin{document}

\title[Profinite groups with many elements of bounded order]{Profinite groups with many elements of bounded order}%

\author[A. Abdollahi]{Alireza Abdollahi}%
\address{Department of Pure Mathematics, Faculty of Mathematics and Statistics, University of Isfahan, Isfahan 81746-73441, Iran.} 
\email{a.abdollahi@math.ui.ac.ir}%
\author[M. Soleimani Malekan]{Meisam Soleimani Malekan}%
\address{Department of Pure Mathematics, Faculty of Mathematics and Statistics, University of Isfahan, Isfahan 81746-73441, Iran; 
	Institute for Research in Fundamental Sciences, School of Mathematics, Tehran, Iran.} 
\email{msmalekan@gmail.com}

\subjclass[2010]{20E18; 20P05, 43A05}%
\keywords{Profinite groups; elements of bounded order; subsets with positive Haar measures; large subsets}%

\begin{abstract}
L\'evai  and  Pyber proposed the following as a conjecture:\\  
 Let $G$ be a profinite group such that the set of solutions of the equation $x^n=1$
has positive Haar measure. Then $G$ has an open subgroup $H$ and an element $t$ such that all
elements of the coset $tH$ have order dividing $n$ (see Problem 14.53 of [The Kourovka Notebook, No. 19, 2019]). \\
We define a constant $c_n$ for all finite groups and prove that the latter conjecture is equivalent with a conjecture saying $c_n<1$. Using the latter equivalence  we observe that correctness of L\'evai and Pyber conjecture  implies the existence of the universal upper bound $\frac{1}{1-c_n}$ on the index of generalized Hughes-Thompson subgroup $H_n$ of finite groups whenever it is non-trivial. It is known that the latter is widely open even for all primes $n=p\geq 5$.  For odd $n$ we also prove that L\'evai and Pyber conjecture is equivalent to show that  $c_n$ is less than $1$ whenever $c_n$ is only computed on finite solvable groups. \\
The validity of the conjecture has been proved in [Arch. Math. (Basel) 75 (2000) 1-7] for $n=2$. Here we confirm the conjecture for $n=3$. 
\end{abstract}
\maketitle
\section{\bf Introduction and Results}

Let $G$ be a Hausdorff compact group. Then $G$ has a unique normalized Haar measure denoted by ${\mathbf m}_G$. In general, the question of weather the interior of every non-empty measurable subset of $G$ with positive Haar measure is non-empty has negative answer even if $G$ is profinite (see e.g. \cite{LP}). However the same question for subsets defined by words is still open. In \cite{LP} the following conjecture is proposed.   

\begin{con}\label{l-pcon}{\rm (Conjecture 3 of \cite{LP}, Problem 14.53 of \cite{Ko})}   
Let $G$ be a profinite group such that the set $X_n(G)$ of solutions of the equation $x^n=1$ in $G$  has positive Haar measure. Then $G$ has an open subgroup $H$ and an element $t$ such that all elements of the coset $tH$ have order dividing $n$.
\end{con}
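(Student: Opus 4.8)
The plan is to turn the profinite statement into a quantitative statement about finite quotients and then to isolate the finite obstruction measured by the constant $c_n$ of the abstract. First I would record that $X_n(G)$ is closed and that, writing $\pi_N\colon G\to G/N$ for the projection onto a finite quotient, $X_n(G)=\bigcap_N \pi_N^{-1}\big(X_n(G/N)\big)$, the intersection running over the open normal subgroups $N$ and forming a decreasing net. Hence $\mathbf{m}_G(X_n(G))=\inf_N \frac{|X_n(G/N)|}{|G/N|}$, so the hypothesis $\mathbf{m}_G(X_n(G))=\alpha>0$ says precisely that the proportion of solutions of $x^n=1$ stays bounded below across all finite quotients. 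Dually, the desired conclusion---an open $H$ and an element $t$ with $(th)^n=1$ for every $h\in H$---is equivalent to producing a coset $tH$ whose image in $G/N$ lies inside $X_n(G/N)$ simultaneously for every $N$; the content of the conjecture is thus that a uniform lower bound on finite densities forces such a coherent family of cosets to exist.

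Next I would establish the equivalence with $c_n<1$ advertised in the abstract. For the easy direction I argue contrapositively: if $c_n=1$ there are finite groups $G_i$ with no coset of a proper subgroup contained in $X_n(G_i)$ and with densities $\frac{|X_n(G_i)|}{|G_i|}$ tending to $1$ quickly enough that $\prod_i \frac{|X_n(G_i)|}{|G_i|}>0$; forming $G=\prod_i G_i$ with product Haar measure gives $\mathbf{m}_G(X_n(G))>0$, while any open subgroup of $G$ contains all but finitely many factors, so a good coset would force some $G_i$ to have exponent dividing $n$---a contradiction. For the hard direction, assuming $c_n<1$, I would use a Lebesgue-density (equivalently, bounded-martingale) argument along the filtration $\{N\}$: since $\alpha>0$ there is a density point, i.e.\ a coset $xN$ in which the relative measure of $X_n(G)$ exceeds $c_n$. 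Localizing to $N$ and twisting by $x$ turns this into the assertion that, in all finite quotients of $N$, the proportion of $h$ with $(xh)^n=1$ exceeds $c_n$; the definition of $c_n$ (in its coset formulation) then forces a genuine sub-coset $tK\subseteq X_n(G)$ with $K$ open, which is the conclusion.

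Finally I would attack the finite inequality $c_n<1$, where the real difficulty lies. Here $H_n(G)$, the subgroup generated by the elements of order not dividing $n$, plays the central role: every coset of $H_n(G)$ other than $H_n(G)$ itself consists entirely of elements of order dividing $n$, so $H_n(G)<G$ already produces a good coset, and $c_n$ measures how large the $X_n$-density can be in the remaining groups; this is what ties $c_n<1$ to the universal index bound $\frac{1}{1-c_n}$ for $[G:H_n(G)]$ recorded in the abstract. For odd $n$ I would reduce the computation of $c_n$ to finite solvable groups by a minimal-counterexample argument stripping away the non-solvable part without decreasing the density, the reduction claimed in the abstract, thereby avoiding the classification. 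It then remains to bound the $X_n$-density away from $1$ among solvable groups $G$ with $H_n(G)=G$; for $n=3$ this is feasible by analyzing the solvable groups with no good coset in which elements of order dividing $3$ are abundant and checking that their density is bounded below $1$, giving $c_3<1$ and hence the conjecture for $n=3$. \textbf{The main obstacle} is exactly this last step in general: establishing $c_n<1$ is equivalent to the generalized Hughes--Thompson index problem, which is open already for the primes $n=p\ge 5$, so no uniform structural bound on $[G:H_n(G)]$ is available there.
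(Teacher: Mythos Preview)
The statement is a conjecture, and the paper does not prove it in general: what the paper actually establishes is the equivalence with $c_n<1$ (Propositions~2.3 and~2.6) and the single case $n=3$ (Theorem~3.1). Your outline of the equivalence matches the paper's approach: Lebesgue density along the filtration of open normal subgroups (the paper's Lemma~2.4) to find a coset in which the relative measure of $X_n(G)$ exceeds $c_n$, after which the definition of $c_n$ forces the whole coset into $X_n$ (Propositions~2.5 and~2.6); and conversely a product construction when $c_n=1$ (Proposition~2.3). One caveat: the paper defines $c_n$ via the twisted sets $X_{n,\phi}(H)=\{x:\prod_{k=0}^{n-1}x^{\phi^k}=1\}$ for automorphisms $\phi$ with $\phi^n=\mathrm{id}$, and its product construction passes through the semidirect product $(\prod_k H_k)\rtimes\langle\phi\rangle$; your untwisted ``no good coset in $G_i$'' product needs that translation to connect to the paper's $c_n$, since $c_n=1$ in the paper's sense does not obviously produce finite groups with $|X_n(G_i)|/|G_i|\to 1$.

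The genuine gaps are in the two substantive inputs. Your reduction to solvable groups for odd $n$ is claimed via ``a minimal-counterexample argument stripping away the non-solvable part''; the paper gives no such argument and instead invokes a deep theorem of Larsen--Shalev to obtain an open prosolvable subgroup, so this step is unsubstantiated in your proposal. More seriously, your treatment of $c_3<1$ is only the phrase ``feasible by analyzing the solvable groups \ldots\ and checking that their density is bounded below $1$''. The paper does not proceed by any classification or case analysis of solvable groups. Its Theorem~3.1 is a direct measure-theoretic commutator computation in a compact group: assuming ${\mathbf m}(X_{3,\alpha}(G))>\frac{15}{16}$, one intersects eight translates of $X$ to force $G$ to be $2$-Engel, hence nilpotent of class at most~$3$, and then a chain of further translate intersections together with $2$-Engel commutator identities yields $gg^\alpha g^{\alpha^2}=1$ for every $g$. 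That concrete mechanism is the entire content of the $n=3$ case, and it is absent from your proposal.
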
 

The validity of Conjecture \ref{l-pcon} has been proved in \cite{LP} for $n=2$. In \cite{SAE} it is shown that the conjecture is valid for $n=2$ even if $G$ is   Hausdorff compact.  It is also proved in \cite{SAE} that if $X_3(G)$ has positive Haar measure in a compact group $G$, then $G$ contains an open normal subgroup which is $2$-Engel. Here we confirm Conjecture \ref{l-pcon} for $n=3$.
To do so, we first show that Conjecture \ref{l-pcon} is equivalent to the following one. We need the following notation in the statement of the conjecture. For an arbitrary group $K$ and an automorphism $\phi$ of $K$ of order dividing a positive integer $n$, define 
$$X_{n,\phi}(K):=\left\{x\in K \;|\; xx^\phi x^{\phi^2}\cdots x^{\phi^{n-1}}=1\right\}.$$ 
 The automorphism group of $K$ will be denoted by ${\rm Aut}(K)$.

\begin{con}\label{sup<1}
	  $$ c_n:=\sup \pr{\left\{\frac{|X_{n,\phi}(H)|}{|H|}: H\,
	\text{\rm is a finite group and}\;\; \phi\in{\rm Aut}(H), \; \phi^n={\rm id}\right\} \setminus\set{1}}<1.$$
\end{con}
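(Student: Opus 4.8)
The plan is to first recast the twisted equation as an ordinary torsion condition inside a larger group. Form the semidirect product $G=H\rtimes\langle\phi\rangle$ with $\phi x\phi^{-1}=x^\phi$. A direct expansion gives $(x\phi)^n=x\,x^\phi\,x^{\phi^2}\cdots x^{\phi^{n-1}}\phi^n$, and since $\phi^n=\mathrm{id}$ this equals $x\,x^\phi\cdots x^{\phi^{n-1}}$. Hence $x\in X_{n,\phi}(H)$ if and only if $(x\phi)^n=1$ in $G$, so $\frac{\abs{X_{n,\phi}(H)}}{\abs{H}}$ is \emph{exactly} the proportion of elements of the coset $H\phi$ whose order divides $n$. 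The constant $c_n$ is therefore the supremum, excluding the value $1$, of the density of $n$-torsion in cosets $H\phi$ where $H\trianglelefteq G$ has index dividing $n$ and some element acts as $\phi$ with $\phi^n=1$. The assertion $c_n<1$ becomes a uniform statement: \emph{if not every element of such a coset is $n$-torsion, then its $n$-torsion density is bounded away from $1$ by a bound independent of $\abs{G}$}. The specialization $\phi=\mathrm{id}$ recovers the density of $\set{y\in H : y^n=1}$ in $H$ itself.

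The essential point is this uniformity: the naive bound $\le 1-\tfrac1{\abs{H}}$ is useless, so I would aim for a structural dichotomy in which either $\psi(x):=x\,x^\phi\cdots x^{\phi^{n-1}}$ is identically $1$ (the excluded value) or a \emph{fixed} positive proportion of $H$ lies outside $X_{n,\phi}(H)$. First I would record the identity $\psi(x^\phi)=x^{-1}\psi(x)x$, which shows $X_{n,\phi}(H)$ is $\phi$-invariant and lets one transfer relations between $x$ and its $\phi$-translates. Then I would try to prove that once the density exceeds a threshold $1-\delta_n$, the conditions $\psi(x)=\psi(y)=1$ survive for enough products and translates to confine a coset of bounded index entirely inside $X_{n,\phi}(H)$; this is the higher analogue of the elementary count giving $c_2=\tfrac34$ (attained by a dihedral coset), where a single element of order $>2$ already annihilates a quarter of the solutions.

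For the untwisted case $\phi=\mathrm{id}$ I would exploit the generalized Hughes--Thompson subgroup $H_n(H)=\valu{x\in H : x^n\neq 1}$. Since every element of order not dividing $n$ lies in $H_n(H)$, we have $\abs{H\setminus X_n(H)}\le\abs{H_n(H)}$; consequently a gap $\frac{\abs{X_n(H)}}{\abs{H}}\le c_n<1$ forces $\abs{H_n(H)}\ge(1-c_n)\abs{H}$, that is $[H:H_n(H)]\le\frac{1}{1-c_n}$ whenever $H_n(H)\neq H$. Thus producing the gap is tantamount to producing a universal bound on the index of the Hughes subgroup. For small $n$ the dichotomy can instead be closed by hand: for $n=3$ one uses that positive-density $3$-torsion forces a $2$-Engel, hence near-exponent-$3$, structure as in \cite{SAE}, after which only finitely many obstruction configurations survive and an explicit count yields $c_3<1$; for odd $n$ one would first reduce to solvable $H$.

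The main obstacle is precisely this uniform index bound. Establishing a single $c_n<1$ valid for all finite $H$ entails bounding $[H:H_n(H)]$ by $\tfrac1{1-c_n}$ whenever $H_n(H)$ is proper, and no such universal bound is known for primes $n=p\ge 5$; an unconditional proof for general $n$ would therefore resolve that open problem. Accordingly, the realistic target of the plan is a complete argument for small $n$ (here $n=3$) via the Engel/structural route above, together with the reduction to solvable groups for odd $n$, isolating the residual difficulty as exactly the Hughes-type universal index bound.
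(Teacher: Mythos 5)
Your framework matches the paper's: the semidirect-product translation is exactly Lemma \ref{relation}, the Hughes--Thompson index connection is the proposition in the introduction, the solvable reduction for odd $n$ is Proposition \ref{S->lp}, and your assessment that the full statement is open (equivalent to the L\'evai--Pyber conjecture, and implying the open Hughes index problem for primes $p\geq 5$) agrees with the paper, which states $c_n<1$ as a conjecture and proves it only for $n=3$. Your threshold/dichotomy framing (``either $\psi\equiv 1$ or a fixed proportion lies outside $X_{n,\phi}(H)$'') is also the right one. The genuine gap is in the single place where something new actually has to be proved, namely $n=3$. Your sketch --- positive-density $3$-torsion forces a $2$-Engel, hence near-exponent-$3$, structure, ``after which only finitely many obstruction configurations survive and an explicit count yields $c_3<1$'' --- does not work as stated. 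Positive density is the wrong hypothesis: as you yourself emphasize, the entire difficulty is uniformity, and the result of \cite{SAE} (positive measure of $X_3$ yields an open $2$-Engel subgroup) produces no universal constant. What the paper proves (Theorem \ref{3split}) is a threshold statement: if $\alpha^3={\rm id}$ and ${\mathbf m}\pr{X_{3,\alpha}(G)}>\frac{15}{16}$, then $X_{3,\alpha}(G)=G$; applied to finite groups with counting measure this gives $c_3\leq\frac{15}{16}$ (Theorem \ref{c3<7/8}). The value $\frac{15}{16}$ is exactly what makes an eight-fold intersection of translates of $X$ have measure greater than $\frac12$, so that \cite[Lemma 4.1]{SAE}, applied inside $G\rtimes\valu{\alpha}$, makes the \emph{whole} group $2$-Engel.

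Moreover, ``finitely many obstruction configurations plus an explicit count'' is not a mechanism: there are $2$-Engel groups of arbitrarily large order admitting order-$3$ automorphisms with $X_{3,\phi}$ proper, so there is no finite list to check; and $2$-Engel does not give exponent $3$, nor is exponent the relevant conclusion --- what must be shown is that the twisted map $x\mapsto xx^\alpha x^{\alpha^2}$ vanishes identically. After the $2$-Engel step the paper runs a second, separate measure argument in the resulting nilpotent-of-class-$\leq 3$ situation: set $Y=X\cap X^{-1}$ (measure $>\frac78$), then for arbitrary $g$ form $Z=Y\cap g^{-1}Y$, $W=Z\cap x_0^{-1}Z$, $T=W\cap y_0^{-1}W$, each intersection retaining positive measure, and a commutator computation using $[a,b,c]=[a,c,b]^{-1}$ and the centrality of weight-$3$ commutators cancels all obstruction terms, forcing $gg^\alpha g^{\alpha^2}=1$ for every $g\in G$. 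Without this closing argument, or some substitute for it, your proposal establishes only the reformulations and not the inequality $c_3<1$ itself.
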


 It is known that Conjecture \ref{sup<1} is valid for $n=2$ and the supremum $c_2$ is $\frac{3}{4}$ (see \cite{Ma}). We shall prove that $c_3<1$. 

If $n$ is odd, using \cite[Theorem 1.10]{LS}, we prove that Conjecture \ref{l-pcon} is equivalent to the following. Here we denote by $\mathcal{S}$ the class of finite solvable groups. 
\begin{con}\label{supS<1}
	$$ c^{\mathcal{S}}_{2n+1}:=\sup \pr{\left\{\frac{|X_{n,\phi}(H)|}{|H|}: H\in \mathcal{S}\,
		\text{\rm and}\;\; \phi\in{\rm Aut}(H), \; \phi^{2n+1}={\rm id}\right\} \setminus\set{1}}<1.$$
\end{con}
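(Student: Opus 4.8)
The plan is to produce a universal constant $c=c(m)<1$, where $m=2n+1$, with the following gap property: for every finite solvable group $H$ and every $\phi\in{\rm Aut}(H)$ with $\phi^{m}={\rm id}$, the ratio $\abs{X_{m,\phi}(H)}/\abs{H}$ is either $1$ or at most $c$. Once such a $c$ is in hand, $c^{\mathcal S}_{2n+1}\le c<1$ is immediate. The first step is to pass to the semidirect product $G=H\rtimes\valu{\phi}$, with $\valu{\phi}$ cyclic of order dividing $m$. Expanding $(\phi x)^{m}$ shows that $x\in X_{m,\phi}(H)$ exactly when the element $\phi x$ of the coset $\phi H$ satisfies $(\phi x)^{m}=1$. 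Hence the ratio is precisely the proportion of the coset $\phi H$ consisting of elements of order dividing $m$ (call these \emph{good} elements), and the target inequality is the finite, solvable shadow of Conjecture \ref{l-pcon}.

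Next I would argue by induction on $\abs{H}$, taking $H$ to be a counterexample of least order realising a ratio $r$ with $c<r<1$. Choose a $\valu{\phi}$-invariant minimal normal subgroup $N\le H$ of $G$; by solvability $N$ is elementary abelian of prime exponent $p$. Projection to $\bar G=G/N$ carries good elements of $\phi H$ to good elements of the image coset, so the induction hypothesis governs the quotient. Over each good representative $\bar g$ the fibre $gN$ is analysed by the $\mathbb F_p$-linear operator $T=1+\theta+\cdots+\theta^{m-1}$ on $N$, where $\theta$ is the conjugation action of $g$ on $N$; note $\theta^{m}={\rm id}$, since $g^{m}\in N$ centralises the abelian group $N$. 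Writing $N$ additively and putting $w=g^{m}\in N$, the element $g\nu$ is good iff $T(\nu)=-w$, so each good fibre contributes either $0$ or $\abs{\ker T}/\abs{N}$ to the average ratio.

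The heart of the argument is to control this local contribution uniformly. When $p\nmid m$ the coprime splitting $N=C_N(\theta)\oplus[N,\theta]$ makes $T$ act as $m\cdot{\rm id}$ on the first summand and as $0$ on the second, so the local density equals $1/\abs{C_N(\theta)}$ when it is nonzero, and is therefore either $1$ (the fixed-point-free fibres, which are entirely good) or at most $1/p$. Reading these contributions along a chief series of $G$, such \emph{tame} layers can keep the overall ratio at $1$ only if every layer is fully good, in which case $\phi H$ consists entirely of elements of order dividing $m$ and $r=1$, a contradiction. It is \cite[Theorem 1.10]{LS} that, in the odd case, licenses the reduction to this solvable and coprime setting.

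The main obstacle is twofold. First, the modular layers with $p\mid m$ resist a clean dichotomy: besides the trivial action, unipotent actions of small Jordan type also annihilate $T$ and yield fully good fibres, so the implication ``$r=1$ forces every layer good'' is far more delicate here and must be established by a careful eigenvalue analysis of $T$ over $\mathbb F_p$. Second, and more seriously, even granting that a coset which is not entirely good must contain at least one defective layer, each such layer costs only a factor at most $1-\frac1p$, while the ratio is an average of products of local densities over a chief series of unbounded length; a priori many layers each contributing close to $1$ could let the average drift back toward $1$. Converting ``$\phi H$ is not entirely good'' into a single defective layer whose deficit is bounded below independently of $H$ is exactly the quantitative core of the problem, morally a Hall--Higman or Hughes--Thompson bound on the index of the generalized Hughes--Thompson subgroup $H_n$. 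Supplying this for all odd $m$ is the step I expect to be hardest; the case $n=2$, where the gap $c_2=\frac34$ is forced by the rigidity of involutions, is the template one would try to extend.
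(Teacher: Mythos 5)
The statement you were asked about is a \emph{conjecture}, not a theorem: the paper states Conjecture \ref{supS<1} as open and proves only its relationship to Conjecture \ref{l-pcon} (Proposition \ref{S->lp} gives one implication, using \cite[Theorem 1.10]{LS} to reduce to solvable quotients; the converse direction goes through Proposition \ref{cj implies cj} via direct products). Your proposal sets out to prove the conjecture itself, and it contains a genuine, and in fact decisive, gap that you yourself flag in the last paragraph: everything hinges on producing a universal constant $c=c(m)<1$ such that ``not entirely good'' forces the ratio down to at most $c$, and no argument for this is supplied. The local fibre analysis is fine as far as it goes (the kernel computation $\ker T=[N,\theta]$ in the coprime case, density $1/\abs{C_N(\theta)}$, is correct), but it only shows each defective layer costs a factor at most $1-\frac1p$ \emph{for that layer}; since the chief series has unbounded length and the overall ratio is an average of products of layer densities, nothing prevents a sequence of finite solvable groups whose ratios are all strictly below $1$ yet accumulate at $1$. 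Ruling out exactly this accumulation \emph{is} the conjecture, so the plan is circular at its core step.

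Two further points. First, the missing quantitative step is not merely hard but is at least as strong as a widely open problem the paper itself isolates: by the paper's Proposition 1.4, $c_m<1$ yields the universal bound $\frac{1}{1-c_m}$ on $\abs{G:H_m(G)}$ whenever the generalized Hughes--Thompson subgroup is nontrivial, and since finite $p$-groups are solvable, already $c^{\mathcal S}_p<1$ for a prime $p\geq 5$ would settle the bounded-index problem for the Hughes subgroup $H_p$ of $p$-groups, which is open (see the Remark citing \cite[Chapter 7]{Kh}). Your closing appeal to a ``Hall--Higman or Hughes--Thompson bound'' is therefore an appeal to a result that does not exist. Second, your invocation of \cite[Theorem 1.10]{LS} is misplaced: in the paper that theorem serves only to reduce the \emph{profinite} Conjecture \ref{l-pcon} (for odd $n$) to solvable quotients; inside your finite solvable induction, solvability is already hypothesized and the theorem licenses nothing, in particular no ``coprime setting''---the modular layers with $p\mid m$ that you correctly identify as problematic cannot be discharged this way. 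In short: the paper proves the $n=3$ case of the circle of conjectures via Theorem \ref{3split} (a $2$-Engel/nilpotency-of-class-$3$ argument for measure $>\frac{15}{16}$), but Conjecture \ref{supS<1} itself has no proof in the paper, and your proposal does not supply one.
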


We conclude this section by noting  how above conjectures are difficult. First we note that  
\begin{prop}
	If Conjecture \ref{sup<1} is true for $n$ i.e. $c_n<1$, then for any finite group $G$ with $H_n(G):=\langle x\in G \;|\; x^n\neq 1\rangle\neq 1$, $|G:H_n(G)|<\frac{1}{1-c_n}$.
\end{prop}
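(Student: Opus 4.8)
The plan is to use the constant $c_n$ through a single, almost tautological instance — the group $G$ itself equipped with the identity automorphism — together with the elementary fact that every element lying outside $H_n(G)$ automatically has order dividing $n$. Writing $N:=H_n(G)$, the first step I would record is the set containment $G\setminus N\subseteq X_n(G)$: since $N$ is generated by, and in particular contains, every $x$ with $x^n\neq 1$, any element not in $N$ must satisfy $x^n=1$. Counting accordingly, and noting that the identity lies in $X_n(G)\cap N$, yields the lower bound
$$|X_n(G)|=|X_n(G)\cap N|+|G\setminus N|\geq 1+\bigl(|G|-|N|\bigr).$$

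Next I would identify $X_n(G)$ with an instance of the family defining $c_n$. Taking $H=G$ and $\phi=\mathrm{id}_G$, which trivially satisfies $\phi^n=\mathrm{id}$, the defining product $x\,x^\phi\cdots x^{\phi^{n-1}}$ collapses to $x^n$, so $X_{n,\mathrm{id}}(G)=X_n(G)$. Here lies the one subtlety, and the step I expect to require the most care: the supremum defining $c_n$ explicitly discards the value $1$, so before I may bound the ratio $|X_n(G)|/|G|$ by $c_n$ I must check that this ratio is genuinely different from $1$. This is exactly the role of the hypothesis $H_n(G)\neq 1$: a nontrivial $H_n(G)$ guarantees at least one generator $x$ with $x^n\neq 1$, whence $X_n(G)\subsetneq G$ and $|X_n(G)|/|G|\neq 1$. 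Consequently $|X_n(G)|/|G|\leq c_n$.

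Finally I would combine the two estimates. From $1+|G|-|N|\leq |X_n(G)|\leq c_n|G|$ I obtain $(1-c_n)|G|\leq |N|-1<|N|$, and since $c_n<1$ by assumption this rearranges to $|G:N|=|G|/|N|<\frac{1}{1-c_n}$, which is the assertion. It is worth stressing that the argument invokes $c_n$ only at the single pair $(G,\mathrm{id})$, so the full supremum over all finite groups and all automorphisms of order dividing $n$ enters here purely as a convenient upper bound for one ratio; the strictness of the final inequality is precisely the slack contributed by the identity element in the counting step.
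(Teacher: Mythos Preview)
Your proof is correct and follows essentially the same approach as the paper: the key ingredients are the containment $G\setminus H_n(G)\subseteq X_n(G)$, the observation that $H_n(G)\neq 1$ forces $X_n(G)\neq G$ so that the ratio $|X_n(G)|/|G|$ falls under the supremum defining $c_n$, and then a direct count. Your explicit use of the identity element to secure the strict inequality is exactly the content of the paper's (terser) statement that the containment is proper.
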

\begin{proof}
	Since $H_n(G)\neq 1$, $X_n(G)\neq G$. Now the proof follows from the fact that $G\setminus H_n(G) \subset X_n(G)$ and the definition of $c_n$. 
\end{proof}

\begin{rem}
	The problem of finding a universal bound from above depending only on $p$ for  the index $|G:H_p(G)|$  of the Hughes-Thompson subgroup $H_p(G)$ of finite $p$-groups $G$ with $H_p(G)\neq 1$ is widely open for all $p\geq 5$ (see \cite[Chapter 7]{Kh} for the history and results on the problem).
\end{rem}

We finish this section with the following two questions.

\begin{qu}
	Suppose that $n$ is a positive integer such that there exists a positive integer $k_n$ depending only on $n$ such that $|G:H_n(G)|\leq k_n$ for all finite groups $G$ with $1\neq H_n(G)$. Is it true that $c_n<1$? The same question whenever $n$ is prime.
\end{qu}

\begin{qu}
	Let $n>1$ be a positive integer such that $c_d<1$ for all prime power divisors $d$ of $n$. Is it true that $c_n<1$?
\end{qu}

\section{\bf Profinite groups}

We denote the normalized Haar measure of a compact group $G$ by ${\mathbf m}_G$, and we will simply write ${\mathbf m}$ if there is no ambiguity. \\

The following easy lemma will be used in the sequel without referring to it. 
 
\begin{lem}{\rm (cf. \cite[Lemma 2.5]{jw2020})}\label{n-large}
	Let $G$ be a compact group and $A\subseteq G$ be a measurable subset. Assume that ${\mathbf m}(A)\geq 1-\epsilon$, then ${\mathbf m}\pr{\bigcap_{k=1}^ng_kA}\geq 1-n\epsilon$ for all $g_1,\dotsc,g_n\in G$. The similar result with  strict inequalities holds. 
\end{lem}
\begin{proof}
	By induction on $n$, we prove the result. For $n=1$, it holds as the measure is left-invariant. Assume that the result is true for $n$; therefore 
	\begin{align*}
		{\mathbf m}\pr{\bigcap_{k=1}^{n+1}g_kA}&={\mathbf m}\pr{\bigcap_{k=1}^{n}g_kA}+{\mathbf m}(g_{n+1}A)-{\mathbf m}\pr{\pr{\bigcap_{k=1}^{n}g_kA}\cup(g_{n+1}A)}\\
		&\geq (1-n\epsilon)+(1-\epsilon)-1,\quad\text{by the induction hypothesis,}\\
		&=1-(n+1)\epsilon.
	\end{align*}
\end{proof}

\begin{lem}
	\label{relation} Let $G$ be a compact group and $\phi$ be a continuous automorphism of $G$ of order dividing $n$. Denote by $G\rtimes\valu{\phi}$ the semidirect product of $G$ by $\valu{\phi}$. Then: 
	\begin{itemize}
		\item[(i)] $X_{n,\phi}(G)$ has nonempty interior if and only if $X_n(G\rtimes\valu{\phi})$ has nonempty interior. 
		\item[(ii)] If $X_{n,\phi}(G)$ has positive Haar measure then $X_n(G\rtimes\valu{\phi})$ has positive Haar measure. 
	\end{itemize}
\end{lem}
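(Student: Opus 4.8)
The plan is to realize $X_{n,\phi}(G)$ as a single clopen slice of $X_n(\Gamma)$, where $\Gamma:=G\rtimes\valu{\phi}$, by means of one explicit translation, and thereby reduce both parts to a transport of measure and of interior between $G$ and that slice. Write $t$ for the canonical generator of $\valu{\phi}$ inside $\Gamma$, so that $txt^{-1}=x^\phi$ for $x\in G$, and let $m$ be the order of $\phi$ (so $m\mid n$). A routine induction gives $(xt)^k = x\,x^\phi\,x^{\phi^2}\cdots x^{\phi^{k-1}}\,t^k$ for all $k\ge 1$ and all $x\in G$; taking $k=n$ and using $t^n=\phi^n=\mathrm{id}$ yields $(xt)^n = x\,x^\phi\cdots x^{\phi^{n-1}}$. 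Hence $xt\in X_n(\Gamma)$ if and only if $x\in X_{n,\phi}(G)$; that is, right translation $\rho\colon x\mapsto xt$ carries $X_{n,\phi}(G)$ bijectively onto $X_n(\Gamma)\cap Gt$. Since $G$ is a clopen subgroup of $\Gamma$ of finite index $m$, the map $\rho$ is a homeomorphism of $G$ onto the clopen coset $Gt$, and, $\Gamma$ being compact and hence unimodular, $\rho$ preserves $\mathbf{m}_\Gamma$; concretely $\mathbf m_\Gamma(\rho(A))=\mathbf m_\Gamma(A)=\tfrac1m\mathbf m_G(A)$ for every measurable $A\subseteq G$, because the restriction of $\mathbf m_\Gamma$ to the open subgroup $G$ is $\tfrac1m\,\mathbf m_G$.

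With this dictionary the easy items are immediate. For (ii), if $\mathbf m_G(X_{n,\phi}(G))>0$ then $\mathbf m_\Gamma(X_n(\Gamma))\ge \mathbf m_\Gamma(\rho(X_{n,\phi}(G)))=\tfrac1m\,\mathbf m_G(X_{n,\phi}(G))>0$. For the forward implication of (i), a nonempty open $U\subseteq X_{n,\phi}(G)$ is carried by the homeomorphism $\rho$ to a nonempty set $\rho(U)\subseteq X_n(\Gamma)\cap Gt$ that is open in $Gt$, hence, as $Gt$ is clopen, open in $\Gamma$; thus $X_n(\Gamma)$ has nonempty interior.

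The substantive direction is the converse of (i), and this is where I expect the real difficulty. A nonempty open subset $V$ of $X_n(\Gamma)$ must meet some clopen coset $Gt^i$ in a nonempty set $V\cap Gt^i$ that is open in $\Gamma$; applying the analogue of the computation above, namely $(xt^i)^n = x\,x^{\phi^i}\cdots x^{\phi^{(n-1)i}}$ together with $t^{in}=\mathrm{id}$, this shows directly only that $X_{n,\phi^i}(G)$ has nonempty interior for \emph{some} $i$, and one must still descend to the distinguished coset $Gt$. The natural tool is the power map $P_k\colon g\mapsto g^k$ for a $k$ chosen with $\gcd(k,n)=1$ and $ik\equiv 1\pmod m$ (such $k$ exists since $\gcd(i,m)$ may be reconciled with a residue coprime to $n$): then $P_k$ sends $Gt^i$ into $Gt$, and, because raising to a power coprime to $n$ permutes the elements of order dividing $n$, it restricts to a continuous bijection of the compact set $X_n(\Gamma)$ onto itself, hence a homeomorphism of $X_n(\Gamma)$. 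The \emph{main obstacle} is that $P_k$ is a homeomorphism only of the closed set $X_n(\Gamma)$ and not of $\Gamma$, so a priori it transports interior relative to $X_n(\Gamma)$ rather than genuine interior in $\Gamma$; closing this gap — for instance by arranging (when the topology permits) that the witnessing open set is a coset $uL$ of an open normal $L\le G$ and verifying that a full coset inside $Gt$ again falls within $X_n(\Gamma)$ — is the crux, and is presumably the place where the hypothesis $\phi^n=\mathrm{id}$ must be exploited most delicately.
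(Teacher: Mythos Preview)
Your coset identity --- that right translation by the generator $t$ carries $X_{n,\phi}(G)$ onto $X_n(\Gamma)\cap Gt$ --- is exactly the paper's entire proof, there recorded as the single line $X_n(G\rtimes\valu{\phi})\cap G\phi^{-1}=X_{n,\phi}(G)\phi^{-1}$. From this identity, (ii) and the forward implication of (i) follow immediately, just as you argue; the paper offers no further detail.

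You are right that the converse of (i) does not follow from that identity alone, and your attempted repair via the power map $P_k$ already has a concrete error: when the open witness lies in a coset $Gt^i$ with $\gcd(i,m)>1$ there is no $k$ with $ik\equiv 1\pmod m$, so your parenthetical ``such $k$ exists'' fails precisely in the cases that matter. In fact no repair is possible, because the converse of (i) is \emph{false} as stated. For $n=2$, take $G=\prod_{k\ge 1}(\mathbb Z/2\mathbb Z)^2$ with $\phi$ swapping the two coordinates in each factor; then $X_{2,\phi}(G)=\{x:xx^\phi=1\}=\mathrm{Fix}(\phi)$ is a closed subgroup of measure zero, hence of empty interior, yet $G$ has exponent $2$, so $G\subseteq X_2(\Gamma)$ and $X_2(\Gamma)$ has nonempty interior. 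Thus the obstacle you isolated is terminal rather than delicate; only the directions already delivered by the one-line identity are actually available.
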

\begin{proof}
	It follows from the equality $X_n(G\rtimes\valu{\phi})\cap G\phi^{-1}=X_{n,\phi}(G)\phi^{-1}$.
\end{proof}
\begin{prop}\label{cj implies cj}
 Conjecture \ref{l-pcon} implies  Conjecture \ref{sup<1}.
\end{prop}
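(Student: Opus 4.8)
The plan is to prove the contrapositive: assuming Conjecture \ref{sup<1} fails (so $c_n = 1$), I will construct a profinite group $G$ witnessing the failure of Conjecture \ref{l-pcon}. If $c_n=1$, then there is a sequence of pairs $(H_k,\phi_k)$ with $H_k$ finite, $\phi_k\in{\rm Aut}(H_k)$, $\phi_k^n={\rm id}$, and ratios $\frac{|X_{n,\phi_k}(H_k)|}{|H_k|}\to 1$ while each ratio is strictly less than $1$ (this is the point of deleting $\{1\}$ from the set: we are looking at groups where the equation $xx^{\phi}\cdots x^{\phi^{n-1}}=1$ does \emph{not} hold identically). Using Lemma \ref{relation}, each $(H_k,\phi_k)$ gives a finite group $G_k:=H_k\rtimes\valu{\phi_k}$ in which the set $X_n(G_k)$ of $n$-th-root-of-identity elements has large relative size, controlled by the ratio above on the coset $H_k\phi_k^{-1}$.

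The key step is to assemble the $G_k$ into a single profinite group $G$, most naturally the Cartesian (unrestricted direct) product $G:=\prod_{k} G_k$ equipped with the product (Haar) measure, so that ${\mathbf m}_G = \prod_k {\mathbf m}_{G_k}$. The idea is that $X_n(G)$ will contain the product set $\prod_k X_n(G_k)$ (an element has order dividing $n$ in a direct product iff each coordinate does), and since $X_n(G_k)$ has relative measure bounded below by a quantity tending to $1$, a suitable subproduct will have positive measure. Here I would choose the factors so that the defect $1-{\mathbf m}_{G_k}(X_n(G_k))$ is summable — for instance by passing to a subsequence along which these defects are summable, which is possible precisely because the ratios tend to $1$. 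Then $\prod_k {\mathbf m}_{G_k}(X_n(G_k)) > 0$, so $X_n(G)$ has positive Haar measure and Conjecture \ref{l-pcon} applies to $G$.

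The crux of the argument — and the step I expect to be the main obstacle — is deriving a contradiction from the conclusion of Conjecture \ref{l-pcon} for this $G$. That conclusion yields an open subgroup $H\le G$ and an element $t$ with every element of $tH$ of order dividing $n$. Because $H$ is open in the product, it contains a basic open subgroup, i.e. $H$ fixes only finitely many coordinates and equals the full product $\prod_{k\ge N} G_k$ in all but finitely many coordinates. The plan is to project $tH$ onto a single coordinate $k\ge N$: this projection is a full coset $t_k G_k$ of $G_k$, and every element of it must then have order dividing $n$. But that forces $X_n(G_k)$ to contain an entire coset of $G_k$, hence (via Lemma \ref{relation} run in the finite setting) forces $X_{n,\phi_k}(H_k)$ to be all of $H_k$, i.e. the ratio equals $1$ — contradicting our choice of the $(H_k,\phi_k)$ with ratio strictly below $1$. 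The delicate points are verifying that the conclusion's coset $tH$ really projects onto a \emph{full} coset in cofinitely many coordinates (so that we can invoke the finite version of Lemma \ref{relation}), and checking that $X_n(G)$ is measurable with the asserted positive measure; both should follow from the product structure and the correspondence $X_n(G_k)\cap H_k\phi_k^{-1}=X_{n,\phi_k}(H_k)\phi_k^{-1}$ already recorded in the proof of Lemma \ref{relation}.
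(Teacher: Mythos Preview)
Your argument has a genuine gap at the positivity-of-measure step. You form $G_k:=H_k\rtimes\valu{\phi_k}$ and then claim that ${\mathbf m}_{G_k}(X_n(G_k))\to 1$ because the ratio $\frac{|X_{n,\phi_k}(H_k)|}{|H_k|}\to 1$. But the latter ratio only controls $X_n(G_k)$ on the single coset $H_k\phi_k^{-1}$; on the other cosets $H_k\phi_k^{-j}$ (in particular on $H_k$ itself, where $X_n(G_k)\cap H_k=X_n(H_k)$) you have no control whatsoever. Concretely, if each $H_k$ has odd order and $\phi_k$ has order $2$, then $|X_2(G_k)|=1+|X_{2,\phi_k}(H_k)|$ and ${\mathbf m}_{G_k}(X_2(G_k))\le \tfrac12+\tfrac{1}{2|H_k|}$, so the infinite product $\prod_k{\mathbf m}_{G_k}(X_n(G_k))$ vanishes. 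The same obstruction arises for any $n$ once infinitely many $\phi_k$ are nontrivial, and you cannot assume otherwise.

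The paper avoids this by reversing the order of the two constructions: first take the product $H:=\prod_k H_k$ with the single product automorphism $\phi:=(\phi_k)$, and only then form $H\rtimes\valu{\phi}$. Now $X_{n,\phi}(H)=\prod_k X_{n,\phi_k}(H_k)$ exactly, so its Haar measure in $H$ is the product $\prod_k\frac{|X_{n,\phi_k}(H_k)|}{|H_k|}$, which one arranges to lie strictly between $0$ and $1$; and its interior is empty because every basic open set is full in cofinitely many coordinates while each factor $X_{n,\phi_k}(H_k)$ is proper. Lemma~\ref{relation} then transfers both positive measure and empty interior to $X_n(H\rtimes\valu{\phi})$ directly, so no coset-projection contradiction argument is needed. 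Your final projection step is fine, but it is rendered unnecessary once the construction is set up this way.
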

\begin{proof}
	If $n$ is such that  Conjecture \ref{sup<1} is not valid, then there   exist sequences $(G_k)$ of finite groups and $(\phi_k)\in\prod_{k=1}^\infty\text{Aut}(G_k)$ such that  $\phi_k^n=1$  and 
	\[ 0<\prod_{k=1}^\infty\frac{|X_{n,\phi_k}(G_k)|}{|G_k|}<1.\]
	Consider the cartezian product $G=\prod_kG_k$ which is clrealy profinite. Then $\phi=(\phi_k)$ is an automorphism of  $G$ of order dividing $n$. It is clear that the measure of $X_{n,\phi}(G)$ is equal to $\prod_k\frac{|X_{n,\phi_k}(G_k)|}{|G_k|}$ and its interior is empty, so by Lemma \ref{relation}, $X_n(G\rtimes\valu{\phi})$ has positive Haar measur and empty interior showing that Conjecture \ref{l-pcon} is not valid. 
\end{proof}

The following lemma will be used in the proof that  ``Conjecture \ref{sup<1} implies  Conjecture \ref{l-pcon}". 
We write ``$N\trianglelefteq_o G$" whenever $N$ is a normal and open subgroup of $G$.
\begin{lem}\label{sup}
	Let $A$ be a closed subset of a profinite group with positive Haar measure and $M$ be any normal open subgroup of $G$. If $\mathcal{X}$ is the set of all normal open subgroups of $G$ contained in $M$. Then 
	\begin{align*}
		\sup\left\{\frac{{\mathbf m}(Ng\cap A)}{{\mathbf m}(N)}: g\in G, N\in {\mathcal X} \right\}=1
	\end{align*}
\end{lem}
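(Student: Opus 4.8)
The plan is to prove the two inequalities separately. The bound $\le 1$ is immediate, since ${\mathbf m}(Ng\cap A)\le {\mathbf m}(Ng)={\mathbf m}(N)$ for every coset, so the content is to produce, for each $\eps>0$, a subgroup $N\in\mathcal{X}$ and a coset $Ng$ whose relative density ${\mathbf m}(Ng\cap A)/{\mathbf m}(N)$ exceeds $1-\eps$. Because $N$ is normal I would freely use $Ng=gN$, so the cosets of a fixed $N\in\mathcal{X}$ partition $G$ into $|G:N|$ pieces of equal measure ${\mathbf m}(N)$. I would also record at the outset the topological fact that the members of $\mathcal{X}$ form a neighbourhood basis at the identity: every open neighbourhood of $1$ contains an open subgroup, hence its (open, since of finite index) normal core, and intersecting that core with $M$ yields a normal open subgroup contained in $M$. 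Consequently the clopen cosets $\set{gN: N\in\mathcal{X},\ g\in G}$ form a basis for the topology of $G$.

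The main step is an outer approximation of $A$ by a clopen union of cosets coming from a \emph{single} $N\in\mathcal{X}$. Fix $\eps>0$. By outer regularity of the Haar measure I would choose an open set $V\supseteq A$ with ${\mathbf m}(V\setminus A)<\eps$. For each $a\in A$ pick $N_a\in\mathcal{X}$ with $aN_a\subseteq V$ (possible by the basis property applied to the neighbourhood $Va^{-1}$ of $1$), and then use compactness of the closed set $A$ to extract a finite subcover $a_1N_{a_1},\dots,a_kN_{a_k}$ of $A$. Setting $N:=\bigcap_{i=1}^k N_{a_i}\in\mathcal{X}$ and $U:=\bigcup_{i=1}^k a_iN_{a_i}$, the set $U$ is a disjoint union of $N$-cosets, say $U=\bigsqcup_{j\in J}g_jN$, and satisfies $A\subseteq U\subseteq V$; in particular ${\mathbf m}(U\setminus A)<\eps$ and ${\mathbf m}(U)\ge{\mathbf m}(A)$.

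Finally I would run an averaging argument over the finitely many cosets constituting $U$. Since the maximum of a finite list is at least its average,
\[
	\max_{j\in J}\frac{{\mathbf m}(A\cap g_jN)}{{\mathbf m}(N)}\ \ge\ \frac{1}{|J|}\sum_{j\in J}\frac{{\mathbf m}(A\cap g_jN)}{{\mathbf m}(N)}=\frac{{\mathbf m}(A\cap U)}{{\mathbf m}(U)}=1-\frac{{\mathbf m}(U\setminus A)}{{\mathbf m}(U)}>1-\frac{\eps}{{\mathbf m}(A)},
\]
where the last step uses $A\subseteq U$ (so ${\mathbf m}(A\cap U)={\mathbf m}(A)$) together with ${\mathbf m}(U)\ge{\mathbf m}(A)$. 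Thus for each $\eps>0$ the supremum in the statement is at least $1-\eps/{\mathbf m}(A)$, and letting $\eps\to 0$ forces it to equal $1$.

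The step I expect to be the crux is the outer approximation producing one common $N\in\mathcal{X}$: it is exactly here that closedness, hence compactness, of $A$ is used, to replace the point-dependent subgroups $N_a$ by the single finite intersection $N$, and one must check that the constraint $N\subseteq M$ survives that intersection (it does, being a finite intersection of subgroups of $M$). Everything after that is the elementary observation that the average relative density of $A$ across the cosets filling the tight outer cover $U$ is forced close to $1$, whence so is the maximum.
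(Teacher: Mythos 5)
Your proof is correct and is essentially the paper's argument: both outer-approximate the closed set $A$ by a union of cosets of a single small $N\in\mathcal{X}$ and then use a pigeonhole/averaging step to find one coset in which $A$ has relative density close to $1$ (the paper's inequality ${\mathbf m}(A)\leq s\,{\mathbf m}(Nx\cap A)$ is exactly your ``maximum exceeds the average'' step). The only real difference is presentational: the paper quotes the identity ${\mathbf m}(A)=\inf\left\{\frac{|AN/N|}{|G/N|}: N\in{\mathcal X}\right\}$ for closed sets as known, whereas you re-derive precisely that fact by hand from outer regularity of the Haar measure together with compactness of $A$.
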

\begin{proof} 
	Let $N\in {\mathcal X}$ be such that $r:=|G:N|$. If $s$ is the number of cosets of $N$ which intersect $A$, then 
	\begin{align}
		\label{ine1}
		(r-s) {\mathbf m}(N)\leq 1-{\mathbf m}(A).
	\end{align}
	On the other hand, assume that ${\mathbf m}(Nx\cap A)=\max\{{\mathbf m}(Ng\cap A): g\in G\}$, so 
	\begin{align}\label{ine2}
		{\mathbf m}(A)\leq s{\mathbf m}(Nx\cap A)
	\end{align}
It follows from inequalities (\ref{ine1}) and (\ref{ine2}) that  
	\begin{align*}
		\frac{{\mathbf m}(A)}{1-{\mathbf m}(A)}\frac{r-s}{s}\leq\frac{{\mathbf m}(Nx\cap A)}{{\mathbf m}(N)}.
	\end{align*}
	Since $A$ is closed, ${\mathbf m}(A)=\inf\left\{\frac{|AK/K|}{|G/K|}: K\trianglelefteq_o G \right\}$. Now since $\frac{|AK/K|}{|G/K|}\leq \frac{|AN/N|}{|G/N|}$, whenever $K\leq N$ are normal subgroups of $G$ of finite index, it follows that  
		 ${\mathbf m}(A)=\inf\left\{\frac{|AN/N|}{|G/N|}: N\in {\mathcal X} \right\}$. Since $\frac{r}{s}=\frac{|G/N|}{|AN/N|}$, the result now follows from the last inequality. 
\end{proof}
For any positive integer $n$ and any class  $\mathcal{X}$ of finite groups, we denote by $c^{\mathcal{X}}_n$ the following positive real number which is at most $1$:
$$ \sup \pr{\left\{\frac{|X_{n,\phi}(H)|}{|H|}: H\in \mathcal{X} \,
	 \textbf{\rm and}\;\; \phi\in{\rm Aut}(H), \; \phi^n={\rm id}\right\} \setminus\set{1}}.$$
\begin{prop}\label{**}
Assume $c^{\mathcal{X}}_n<1$ and suppose that	$G$ is a profinite group. Let $M$ be a normal open subgroup of $G$ and $\phi$ be a continues automorphism of $M$ such that $N^\phi\subset N$ for all normal open subgroups $N$ of $G$ contained in $M$ and $M/N \in {\mathcal X}$. Then 
	${\mathbf m}_M(X_{n,\phi}(M))\leq c^{\mathcal{X}}_n$ if $X_{n,\phi}(M)\neq M$.
\end{prop}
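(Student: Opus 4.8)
The plan is to estimate the Haar measure of the closed set $A:=X_{n,\phi}(M)$ by approximating it from above by a single finite $\mathcal{X}$-quotient of $M$. Note first that $A$ is closed, being the preimage of $\set{1}$ under the continuous map $\ma{x}{xx^\phi\cdots x^{\phi^{n-1}}}$, hence measurable; and that $\phi^n=\mathrm{id}$ (as is implicit in the use of $X_{n,\phi}$ and of $c^{\mathcal{X}}_n$), so the automorphisms induced on the finite quotients will again have order dividing $n$. Since $X_{n,\phi}(M)\neq M$ by hypothesis, I would begin by fixing a witness $y\in M$ with $w:=yy^\phi\cdots y^{\phi^{n-1}}\neq 1$.

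Next, since $M$ is profinite and $w\neq 1$, I would produce a subgroup $N\trianglelefteq_o G$ with $N\subseteq M$, $M/N\in\mathcal{X}$ and $w\notin N$. Because $N^\phi\subseteq N$ by hypothesis, $\phi$ descends to an automorphism $\bar\phi$ of the finite group $\bar M:=M/N$ with $\bar\phi^n=\mathrm{id}$, and the canonical projection $\pi\colon M\To\bar M$ satisfies $\pi\circ\phi=\bar\phi\circ\pi$. Consequently $\pi$ carries $A$ into $X_{n,\bar\phi}(\bar M)$, so that $|AN/N|=|\pi(A)|\leq |X_{n,\bar\phi}(\bar M)|$. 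Moreover $w\notin N$ forces $\pi(y)\notin X_{n,\bar\phi}(\bar M)$, whence $X_{n,\bar\phi}(\bar M)\neq\bar M$; as $\bar M\in\mathcal{X}$ and $\bar\phi^n=\mathrm{id}$, the definition of $c^{\mathcal{X}}_n$ (a supremum taken after deleting the value $1$) gives $\frac{|X_{n,\bar\phi}(\bar M)|}{|\bar M|}\leq c^{\mathcal{X}}_n$.

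Finally I would combine these estimates. Since $A\subseteq AN$ and $AN$ is a union of $|AN/N|$ cosets of $N$, each of measure $1/|M/N|$, we obtain
\[
{\mathbf m}_M(A)\leq {\mathbf m}_M(AN)=\frac{|AN/N|}{|M/N|}\leq\frac{|X_{n,\bar\phi}(\bar M)|}{|\bar M|}\leq c^{\mathcal{X}}_n,
\]
which is the assertion.

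The step I expect to be the main obstacle is the production of the separating quotient $N$ in the second paragraph: for a general class $\mathcal{X}$ it is not automatic that the subgroups in $\set{N\trianglelefteq_o G: N\subseteq M,\ M/N\in\mathcal{X}}$ intersect in $1$ (equivalently, that they are cofinal among all normal open subgroups of $G$ inside $M$), and this is precisely what is needed to separate $w$ from the identity by an $\mathcal{X}$-quotient. When $\mathcal{X}$ is the class of all finite groups this is immediate from profiniteness of $G$; for $\mathcal{X}=\mathcal{S}$ one must know that $M$ is pro-solvable, so that its finite solvable quotients form a neighbourhood base of the identity, a point that has to be secured from the ambient setup (for $n=3$, from the existence of an open $2$-Engel, hence nilpotent, subgroup). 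Once this cofinality is in hand, the remaining verifications — closedness of $A$, the intertwining $\pi\circ\phi=\bar\phi\circ\pi$, and the coset count for ${\mathbf m}_M(AN)$ — are routine.
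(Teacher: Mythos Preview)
Your argument is correct and is essentially the paper's own proof: both pass to a finite $\mathcal{X}$-quotient $M/N$, induce $\bar\phi$, note that the image of $X_{n,\phi}(M)$ lies in $X_{n,\bar\phi}(M/N)$, and invoke the definition of $c^{\mathcal{X}}_n$; you do this directly by picking a single $N$ separating the witness $w\neq 1$, while the paper argues by contradiction, ranging over all $N$ and using $\bigcap N=\{1\}$. The cofinality issue you flag is exactly right and is not addressed by the paper either --- it is only secured in the two applications (where $\mathcal{X}$ is all finite groups, respectively where $M$ is arranged via \cite[Theorem~1.10]{LS} so that every $M/N$ is solvable).
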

\begin{proof}
	Seeking a contradiction, let us suppose ${\mathbf m}_M(X_{n,\phi}(M))> c^{\mathcal{X}}_n$. Let $N$ be a normal open subgroup of $G$. Consider the following  automorphism of $M/(M\cap N)$,
	\[\bar\phi:\frac M{M\cap N}\rightarrow\frac M{M\cap N}, \quad \bar x^{\bar\phi}:=\overline{x^\phi}\]
	We have 
	\[{\mathbf m}_M\pr{X_{n,\phi}(M)}\leq \frac{|X_{n,\bar\phi}(M/(M\cap N))|}{|M/(M\cap N)|}\]
	the inequality holds because if $x\in X_{n,\phi}(M)$, then $\bar x\in X_{n,\bar\phi}(M/(M\cap N))$. Since $c^{\mathcal{X}}_n<1$, so $X_{n,\bar\phi}(M/(M\cap N))=M/(M\cap N)$, whence $\prod_{k=0}^{n-1}x^{\phi^k}\in N$ for all $x\in M$. Since $\bigcap\{N\trianglelefteq_o G: N\leq M\}=\{1\}$,  $\prod_{k=0}^{n-1}x^{\phi^k}=1$ for all $x\in M$, i.e. $X_{n,\phi}(M)=M$.
\end{proof}

We are now in a position to prove the following:
\begin{prop}\label{cj <- cj}
Conjecture \ref{sup<1} implies Conjecture \ref{l-pcon}.
\end{prop}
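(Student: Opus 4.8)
The plan is to assume $c_n<1$ and, given a profinite group $G$ with ${\mathbf m}(X_n(G))>0$, to produce a single coset of an open subgroup lying entirely inside $X_n(G)$. First I would note that $A:=X_n(G)$ is closed, being the preimage of the closed set $\{1\}$ under the continuous power map $\ma{x}{x^n}$, and has positive measure, so Lemma \ref{sup} applies with $M=G$ and $\mathcal{X}$ the class of all finite groups. Since the supremum there equals $1$ while $c_n<1$, I can fix a normal open subgroup $N\trianglelefteq_o G$ and a coset $C$ of $N$ with ${\mathbf m}(C\cap A)/{\mathbf m}(N)>c_n$. As $C\cap A\neq\emptyset$, I then choose a base point $s\in C\cap X_n(G)$; this gives $C=sN$ and, crucially, $s^n=1$.

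The key reduction is to recognize $sN\cap X_n(G)$ as a twisted-power set. Let $\phi_s$ be conjugation by $s$ on $N$, a continuous automorphism; since $s^n=1$ we have $\phi_s^n=\mathrm{id}$ and moreover $\phi_s(a)=sas^{-1}$ for $a\in N$. A direct expansion of $(sa)^n$, pushing all copies of $s$ to the left and using $s^n=1$, shows that $(sa)^n=1$ if and only if $a\,\phi_s(a)\cdots\phi_s^{n-1}(a)=1$, that is, $\set{a\in N: sa\in X_n(G)}=X_{n,\phi_s}(N)$. Left-translating this identity by $s$ and invoking the invariance of ${\mathbf m}$ turns the density bound of the previous paragraph into ${\mathbf m}_N\pr{X_{n,\phi_s}(N)}>c_n$.

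Finally I would apply Proposition \ref{**} with $M=N$, $\phi=\phi_s$, and $\mathcal{X}$ the class of all finite groups, so that $c^{\mathcal{X}}_n=c_n$. Its hypothesis asks that $\phi_s(N')\subseteq N'$ for every normal open $N'\trianglelefteq_o G$ with $N'\subseteq N$ (the condition $N/N'\in\mathcal{X}$ being automatic, as $N'$ is open); but $\phi_s(N')=sN's^{-1}=N'$ because $N'$ is normal in $G$, so the hypothesis holds for free. Since ${\mathbf m}_N\pr{X_{n,\phi_s}(N)}>c_n$, the dichotomy in Proposition \ref{**} forces $X_{n,\phi_s}(N)=N$. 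Unwinding the identification, this says $sa\in X_n(G)$ for all $a\in N$, i.e. the whole coset $sN=C$ lies in $X_n(G)$; every element of it has order dividing $n$. Taking the open subgroup $H=N$ and the element $t=s$ then verifies Conjecture \ref{l-pcon}.

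I expect the main obstacle to be the bookkeeping in the second paragraph: one must select the base point $s$ \emph{inside} $X_n(G)$ precisely so that $s^n=1$, which is exactly what makes $\phi_s$ have order dividing $n$ and collapses the general expansion of $(sa)^n$ into the clean twisted product defining $X_{n,\phi_s}(N)$. Verifying that the normal-subgroup hypothesis of Proposition \ref{**} is automatic—because conjugation by an element of $G$ fixes every $G$-normal subgroup—is the other point to check, though it is routine once the setup is arranged.
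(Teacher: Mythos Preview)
Your proof is correct and follows essentially the same route as the paper: use Lemma \ref{sup} to find an open normal subgroup $N$ and a base point $s\in X_n(G)$ with density exceeding $c_n$, identify $sN\cap X_n(G)$ with $X_{n,\phi_s}(N)$ via the expansion of $(sa)^n$, and then invoke Proposition \ref{**} to force $X_{n,\phi_s}(N)=N$. Your write-up is in fact slightly more careful than the paper's (you explicitly note that $X_n(G)$ is closed and verify the $\phi_s$-invariance hypothesis of Proposition \ref{**}), but the argument is the same.
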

\begin{proof}
	Let $G$ be a profinite group such that $\textbf{m}_G(X_n(G))>0$. By Lemma \ref{sup}, there exist a normal open subgroup $M$ and $g\in X_n(G)$, such that 
	\begin{align}\label{*}
		c_n<\frac{{\mathbf m}_G\pr{Mg \cap X_n(G)}}{{\mathbf m}_G(M)}=\frac{{\mathbf m}_G\pr{M\cap X_n(G)g^{-1}}}{{\mathbf m}_G(M)}.
	\end{align} 
	Put $\phi:\ra MM$, $\ma x {gxg^{-1}}$. Since $g^n=1$, $\phi^n=1$ and  
	\[ M\cap X_n(G)g=X_{n,\phi}(M)\]
	and the inequality \ref{*} means that ${\mathbf m}_M(X_{n,\phi}(M))>c_n$, so by Proposition \ref{**}, $X_{n,\phi}(M)=M$, whence $Mg\subseteq X_n(G)$. 
\end{proof}

\begin{prop} \label{S->lp}
For odd $n$, Conjecture \ref{supS<1} implies Conjecture \ref{l-pcon}.
\end{prop}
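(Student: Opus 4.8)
The plan is to run the same scheme as in the proof of Proposition \ref{cj <- cj}, the only novelty being that the finite quotients that arise must be split into solvable and non-solvable ones: the solvable quotients are controlled by the standing hypothesis $c^{\mathcal{S}}_n<1$ (Conjecture \ref{supS<1}), while the non-solvable quotients are controlled by \cite[Theorem 1.10]{LS}. Concretely, for odd $n$ I expect that result to supply a constant $\beta_n<1$ such that every finite non-solvable group $K$ carrying an automorphism $\psi$ with $\psi^n=\mathrm{id}$ and $X_{n,\psi}(K)\neq K$ satisfies $\frac{|X_{n,\psi}(K)|}{|K|}\leq\beta_n$. Setting $\gamma:=\max\set{c^{\mathcal{S}}_n,\beta_n}<1$, the whole argument then reduces to showing that any finite quotient on which the twisted torsion proportion exceeds $\gamma$ is forced to be ``full''.

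So I would let $G$ be profinite with ${\mathbf m}_G(X_n(G))>0$. Since the supremum in Lemma \ref{sup} equals $1>\gamma$, I can choose $M\trianglelefteq_o G$ and $g\in X_n(G)$ with ${\mathbf m}_M(X_{n,\phi}(M))>\gamma$, where $\phi\colon\ra MM$, $\ma x{gxg^{-1}}$; here $\phi^n=\mathrm{id}$ because $g^n=1$, exactly as in Proposition \ref{cj <- cj}. The goal is $X_{n,\phi}(M)=M$, for then $Mg\subseteq X_n(G)$ and Conjecture \ref{l-pcon} holds with $H=M$ and $t=g$. To this end I fix any $N\trianglelefteq_o G$ with $N\leq M$. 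Since $N$ is normal in $G$ and $\phi$ is conjugation by $g\in G$, automatically $N^\phi=N$, and the canonical image of $X_{n,\phi}(M)$ lies in $X_{n,\bar\phi}(M/N)$, so that $\frac{|X_{n,\bar\phi}(M/N)|}{|M/N|}\geq{\mathbf m}_M(X_{n,\phi}(M))>\gamma$.

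Now I would split into cases. If $M/N$ is solvable, then $\frac{|X_{n,\bar\phi}(M/N)|}{|M/N|}>\gamma\geq c^{\mathcal{S}}_n$ forces this ratio to equal $1$ by the definition of $c^{\mathcal{S}}_n$; if $M/N$ is non-solvable, the same conclusion follows from $\gamma\geq\beta_n$ and the bound of \cite[Theorem 1.10]{LS}. In either case $X_{n,\bar\phi}(M/N)=M/N$, i.e. $\prod_{k=0}^{n-1}x^{\phi^k}\in N$ for every $x\in M$. As $\bigcap\set{N\trianglelefteq_o G:N\leq M}=\set 1$, letting $N$ shrink yields $\prod_{k=0}^{n-1}x^{\phi^k}=1$ for all $x\in M$, that is $X_{n,\phi}(M)=M$, as required; this final passage to the inverse limit is precisely the one already carried out in Proposition \ref{**}.

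The main obstacle is the correct invocation of \cite[Theorem 1.10]{LS} in the twisted, non-solvable case: one must check that the cited estimate really applies to the coset/automorphism form $X_{n,\bar\phi}(M/N)$ (equivalently, to elements of order dividing $n$ in the coset $(M/N)\bar\phi$ of $(M/N)\rtimes\valu{\bar\phi}$) and that the resulting constant $\beta_n<1$ is uniform over \emph{all} non-solvable finite groups. Oddness of $n$ is essential, since it is what guarantees, through the classification-based estimates underlying \cite[Theorem 1.10]{LS}, that a non-solvable group cannot accumulate twisted $n$-torsion up to proportion $1$ without being full; the solvable part, by contrast, is absorbed directly into the hypothesis $c^{\mathcal{S}}_n<1$.
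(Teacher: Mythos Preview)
Your strategy differs from the paper's precisely at the point you yourself flag as ``the main obstacle''. You try to extract from \cite[Theorem 1.10]{LS} a uniform constant $\beta_n<1$ bounding $|X_{n,\psi}(K)|/|K|$ over \emph{all} non-solvable finite groups $K$ and all $\psi\in{\rm Aut}(K)$ with $\psi^n={\rm id}$, and then run a solvable/non-solvable case split on each quotient $M/N$. But that theorem, as the paper uses it, is a \emph{structural} statement about the profinite group $G$ itself: for odd $n$, positivity of ${\mathbf m}_G(X_n(G))$ forces $G$ to possess an open normal prosolvable subgroup. Whether this can be repackaged into your twisted uniform bound $\beta_n$ is not the content of the cited result and would need separate justification --- exactly the gap you identify.

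The paper sidesteps this entirely. It first invokes \cite[Theorem 1.10]{LS} to obtain an open normal $M_0\trianglelefteq_o G$ with every finite quotient $M_0/N$ (for $N\trianglelefteq_o G$, $N\leq M_0$) solvable. Only then is Lemma \ref{sup} applied, with the search restricted to open normal subgroups \emph{inside} $M_0$; this yields $M\leq M_0$ and $g\in X_n(G)$ with ${\mathbf m}_M(X_{n,\phi}(M))>c^{\mathcal{S}}_n$, where $\phi$ is conjugation by $g$. Since $M\leq M_0$, every quotient $M/N$ is automatically solvable, so Proposition \ref{**} applies directly with $\mathcal{X}=\mathcal{S}$. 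There is no case split and no auxiliary constant $\beta_n$: the non-solvable case simply never arises once $M$ has been chosen inside the prosolvable open subgroup furnished by \cite{LS}. The final inverse-limit step is identical to yours.

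In short, your outline could perhaps be completed, but it trades a single structural application of \cite[Theorem 1.10]{LS} for a quantitative, twisted, uniform-over-non-solvable-groups estimate that is not what the theorem asserts; the paper's route is both shorter and avoids the unverified step.
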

\begin{proof}
Let $G$ be a profinite group such that ${\mathbf m}_G(X_n(G))>0$. By \cite[Theorem 1.10]{LS} and Lemma \ref{sup}, there exist a normal open subgroup $M$ and $g\in X_n(G)$ such that 
such that $M/N$ is finite solvable for all open normal subgroups $N$ of $G$ contained in $M$, and
	\begin{align}\label{*S}
	c^{\mathcal{S}}_n<\frac{{\mathbf m}_G\pr{Mg \cap X_n(G)}}{{\mathbf m}_G(M)}=\frac{{\mathbf m}_G\pr{M\cap X_n(G)g^{-1}}}{{\mathbf m}_G(M)}.
	\end{align} 
	Put $\phi:\ra MM$, $\ma x {gxg^{-1}}$. Since $g^n=1$, $\phi^n=1$ and  
	\[ M\cap X_n(G)g=X_{n,\phi}(M)\]
	and the inequality \ref{*S} means that ${\mathbf m}_M(X_{n,\phi}(M))>c^{\mathcal{S}}_n$, so by Proposition \ref{**}, $X_{n,\phi}(M)=M$, whence $Mg\subseteq X_n(G)$. 
\end{proof}

\section{\bf Compact groups with splitting automorphisms of order $3$} \label{large}

In this section we prove $c_3<1$.

\begin{thm}\label{3split}
	Let $G$ be a compact group and $\alpha$ be an automorphism of $G$ such that $\alpha^3={\rm id}$ and the set $X=\{x\in G \;|\; x x^\alpha x^{\alpha^2}=1\}$ is measurable with ${\mathbf m}(X)>\frac{15}{16}$. Then $X=G$.
\end{thm}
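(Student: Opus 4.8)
The plan is to fix an arbitrary $g\in G$ and to prove that $N(g)=1$, where throughout I write $N(x):=xx^\alpha x^{\alpha^2}$, so that $X=\set{x\in G: N(x)=1}$. The whole strategy rests on Lemma~\ref{n-large}. Call a transformation of $G$ \emph{admissible} if it has the form $x\mapsto c\,x^{\alpha^{j}}d$; each such map preserves ${\mathbf m}$, since left/right translations preserve the Haar measure and an automorphism of a compact group carries the normalized Haar measure to itself. If $T_1,\dots,T_m$ are admissible with $m\le 16$, then each $T_i^{-1}(X)$ has measure $>\tfrac{15}{16}$, and the strict form of Lemma~\ref{n-large} gives ${\mathbf m}\pr{\bigcap_{i=1}^{m}T_i^{-1}(X)}\ge 1-16\pr{1-{\mathbf m}(X)}>0$, so the intersection is nonempty. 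Hence it suffices to produce at most sixteen admissible translates $T_1(x),\dots,T_m(x)$ and an algebraic identity guaranteeing that $T_i(x)\in X$ for all $i$ forces $N(g)=1$. Since the conclusion $N(g)=1$ no longer mentions $x$, a single common point $x$ finishes the argument, and as $g$ was arbitrary we conclude $X=G$.

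The algebraic engine I would use is the product-to-cocycle identity: for all $x,y\in G$,
\[
N(xy)=x\,\pr{y x^{\alpha} y^{-1}}\,\pr{(yy^{\alpha})\, x^{\alpha^{2}}\,(yy^{\alpha})^{-1}}\,N(y),
\]
obtained by sliding $x^{\alpha}$ and $x^{\alpha^{2}}$ leftward past the $y$-factors in $N(xy)=xy\,x^{\alpha}y^{\alpha}\,x^{\alpha^{2}}y^{\alpha^{2}}$; there is an entirely symmetric ``left'' version factoring out $N(x)$ on the other side. Two features make this usable. First, when $y\in X$ one has $yy^{\alpha}=(y^{\alpha^{2}})^{-1}$, so the conjugating factors simplify and the right-hand side becomes an explicit word in $x,x^{\alpha},x^{\alpha^{2}}$ and $y,y^{\alpha},y^{\alpha^{2}}$. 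Second, applying the identity to a triple $x,y,xy$ all lying in $X$ turns the three memberships into the genuine commutator relation
\[
x\,\pr{y x^{\alpha} y^{-1}}\,\pr{(y^{\alpha^{2}})^{-1} x^{\alpha^{2}} y^{\alpha^{2}}}=1 .
\]
These relations, together with the cyclic variants coming from the $\alpha$-invariance of $X$ (if $x\in X$ then also $x^{\alpha}x^{\alpha^{2}}x=1$), are the building blocks.

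From here the plan is to choose the admissible translates adapted to $g$ so that the accumulated relations collapse to $N(g)=1$. Concretely I would feed a bounded list — $x$ together with products such as $gx,\ xg,\ gx^{\alpha},\ x^{\alpha}g,\dots$, each an admissible translate of $x$ — into the two displayed identities, obtaining commutator relations in which the $x$-dependence can be eliminated so that only the single equation $N(g)=1$ survives. It is often cleaner to carry out this bookkeeping in the semidirect product $G\rtimes\valu{\alpha}$: by Lemma~\ref{relation} the reformulation $N(x)=(xa)^{3}$ turns every statement into an identity among cubes in the coset $Ga$, where the three factors of $N$ are visibly the three consecutive factors of a cube.

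The hard part is exactly this last step: pinning down a specific family of at most sixteen translates and verifying that the associated relations eliminate $x$ and leave precisely $N(g)=1$, while never letting the number of conditions exceed sixteen (so that Lemma~\ref{n-large} still yields a nonempty intersection). This is a finite but delicate commutator computation. The structural reason it should terminate is that large ${\mathbf m}(X)$ already forces, by \cite{SAE}, an open normal $2$-Engel subgroup, i.e.\ a nilpotent-of-bounded-class environment in which such word identities close up; the extremal configuration of this kind is what should make the constant exactly $\tfrac{15}{16}$, equivalently sixteen translates. Earning this sharp value — rather than the weaker bound a naive counting of conditions would give — is where I expect the real difficulty to lie.
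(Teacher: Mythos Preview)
Your framework is sound---intersecting a bounded number of measure-preserving translates of $X$ via Lemma~\ref{n-large} is exactly how the paper begins---but the proposal stops short of a proof at the point where the real content lies. You yourself identify the hard part as ``pinning down a specific family of at most sixteen translates and verifying that the associated relations eliminate $x$,'' and then do not carry it out. As written, this is a plan, not a proof.

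More substantively, you invoke the $2$-Engel structure from \cite{SAE} only as a heuristic (``the structural reason it should terminate''), whereas in the paper it is the actual engine of the argument. The paper first spends eight translates of $X$ (this already needs ${\mathbf m}(X)>\tfrac{15}{16}$ to get the intersection nonempty) to show, via \cite[Lemma~4.1]{SAE}, that $G$ itself is $2$-Engel, hence nilpotent of class at most $3$. Only with this global information in hand does the commutator calculus become tractable: one knows $[a,b,c]=[a,c,b]^{-1}$, weight-$3$ commutators are central, and for $x\in X\cap X^{-1}$ the subgroup $\langle x,x^{\alpha},x^{\alpha^{2}}\rangle$ is abelian. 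The paper then proceeds through a \emph{nested} sequence $Y\supset Z\supset W\supset T$, at each stage choosing an auxiliary element ($x_0$, then $y_0$) from the previous set and halving the measure bound, finally reaching a relation that forces $gg^{\alpha}g^{\alpha^{2}}=1$. Your scheme of fixing all sixteen translates in advance, with coefficients depending only on $g$, does not match this: the translates in the paper depend on the intermediate choices $x_0,y_0$, and without the nilpotency-of-class-$3$ reduction there is no reason the cocycle identities you wrote down will ever close up. To turn your outline into a proof you would need to (i) actually establish the $2$-Engel property as a first step, and (ii) perform the explicit elimination, which is precisely what you have deferred.
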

\begin{proof}
First we prove that $G$ is $2$-Engel. The proof is similar to an argument used in the proof of \cite[Theorem 4.4]{SAE}. We give the proof for the reader's convenience.
For any $a,b\in G$ we must prove that $[a,b,b]=1$.  Consider the set 
$$M:=X \cap b^{-1} X \cap aX \cap a^{-1}X \cap ab^{-1}X \cap ba^{-1}X \cap abX \cap b^{-1} a^{-1}X.$$
 Since ${\mathbf m}(M)> \frac{1}{2}$, there exists $x\in X$ such that 
 \begin{align*}
 1&=(x\alpha^{-1})^3=(bx\alpha^{-1})^3=(ax\alpha^{-1})^3=(a^{-1}x\alpha^{-1})^3=(ab^{-1}x\alpha^{-1})^3\\
 &=(ba^{-1}x\alpha^{-1})^3=(abx\alpha^{-1})^3= (b^{-1}a^{-1}x\alpha^{-1})^3,
 \end{align*}
 where we are writing in the semidirect product $G\rtimes \langle \alpha \rangle$ by noting that 
 $g\in X$ if and only if $(g\alpha^{-1})^3=1$ in $G\rtimes \langle \alpha \rangle$.  Now \cite[Lemma 4.1]{SAE} implies that $[a,b,b]=1$.\\

Let $X^{-1}=\{x^{-1} \;|\; x\in X\}$ and $Y=X \cap X^{-1}$.  Then ${\mathbf m}(Y)>\frac{7}{8}$. Note that for all $x\in Y$, 
$$\langle x,x^\alpha,x^{\alpha^2}\rangle=\langle x,x^\alpha \rangle=\langle x^\alpha,x^{\alpha^2}\rangle=\langle x,x^{\alpha^2}\rangle \;\; {\rm are \; all \; abelian.} \;\;\;(*)$$ By \cite[Theorem 7.15 (iv)]{R}, $$[a,b,c]=[a,c,b]^{-1} \;\; {\rm for \; all} \; a,b,c\in G. \;\; (**)$$ 
It follows from $(*)$ and $(**)$ that 
$$[a,b,c]=1 \; {\rm for \; all \;} a,b,c\in\{x,x^\alpha,x^{\alpha^2},h\}, \;\; {\rm for \; all\;} h \in G. \;\; (1)$$

Let $g$ be an arbitrary element of $G$. Consider the set $Z=Y \cap g^{-1} Y$. Since ${\mathbf m}(Z)>\frac{3}{4}$, $Z\neq \varnothing $ and for all  $x\in Z$ we have
$$xx^\alpha x^{\alpha^2}=gx (gx)^\alpha (gx)^{\alpha^2}=1. \;\;\;(2)$$ 
Using $(1)$ and $(2)$ and since $G$ is nilpotent of class at most $3$ (see e.g. \cite[Corollary 3 page 45]{R}), it follows that 
$$(gg^\alpha g^{\alpha^2})^{-1}=[x,g^\alpha] [x,g^{\alpha},g^{\alpha^2}] [g^{\alpha^2},x^{\alpha^2}] \;\; {\rm for \; all\;} x \in Z \;\; (3)$$
Now consider $W=Z \cap x_0^{-1}Z$ for some $x_0\in Z$. Since ${\mathbf m}(W)>\frac{1}{2}$, $W$ is nonempty and for all  $y \in W$ we have 
 $$(gg^\alpha g^{\alpha^2})^{-1}=[x_0y,g^\alpha] [x_0y,g^{\alpha},g^{\alpha^2}] [g^{\alpha^2},(x_0y)^{\alpha^2}]=[y,g^\alpha] [y,g^{\alpha},g^{\alpha^2}] [g^{\alpha^2},y^{\alpha^2}].$$
Note that since $G$ is nilpotent of class at most $3$, commutators of weight $3$ are central. It follows from $(3)$ that 
 $$(gg^\alpha g^{\alpha^2})^{-1}=[x_0,g^\alpha,y][g^{\alpha^2},x_0^{\alpha^2},y^{\alpha^2}] \;\; {\rm for \; all\;} y\in W. \;\; (4)$$
Now consider $T=W \cap y_0^{-1} W $ for some $y_0\in W$. Since ${\mathbf m}(T)>0$, there exists $z\in W$ such that $y_0z\in W$. It follows from $(4)$ that 
$$(gg^\alpha g^{\alpha^2})^{-1}=[x_0,g^\alpha,y_0z][g^{\alpha^2},x_0^{\alpha^2},(y_0z)^{\alpha^2}]=[x_0,g^\alpha,y_0][g^{\alpha^2},x_0^{\alpha^2},y_0^{\alpha^2}].$$
Since commutators of weight $3$ are central in $G$, 
$$[x_0,g^\alpha,y_0z][g^{\alpha^2},x_0^{\alpha^2},(y_0z)^{\alpha^2}]=[x_0,g^\alpha,y_0][g^{\alpha^2},x_0^{\alpha^2},y_0^{\alpha^2}] [x_0,g^\alpha,z][g^{\alpha^2},x_0^{\alpha^2},z^{\alpha^2}].$$
Therefore, $$[x_0,g^\alpha,z][g^{\alpha^2},x_0^{\alpha^2},z^{\alpha^2}]=1$$
and so, as $z\in W$, it follows from $(4)$ that $gg^\alpha g^{\alpha^2}=1$. This completes the proof.
\end{proof}

\begin{rem}\label{rm1}
	Using the first part of our proof of Theorem \ref{3split} and applying \cite[Theorem 6.5]{jw2020} (cf \cite[Th\'eor\`eme 5]{jw2000}) the number $\frac{15}{16}$ can be reduced to $\frac{7}{8}$. 
\end{rem}

\begin{rem}
	In \cite{MK} groups $G$ having an automorphism $\phi \in {\rm Aut}(G)$ with $\phi^n=1$ such that $X_{n,\phi}(G)$ is a large
 set in the sense of \cite{jw2000} for $n=3,4$ are studied. For the case $n=3$ it is proved in \cite{MK} that $G=X_{3,\phi}(G)$. For the case $n=4$, it is proved that a normal solvable subgroup $H$ of $G$ of derived length $d\geq 3$ is nilpotent of class at most $\frac{9^{d-2}+1}{2}$. It is interesting to know if the same latter result is valid  we replace ``largness" of $X_{n,\phi}(G)$ by  weaker condition ``$k$-largness" for some $k$ in the sense of \cite{jw2020}. If the latter is valid, then for compact groups $G$ with ${\mathbf m}_G(X_{4,\phi}(G))>1-\frac{1}{k}$ and $\phi$ is continuous   by Lemma \ref{n-large} we have the same results.  
\end{rem}

\begin{thm}\label{c3<7/8}
	$c_3 < 1$.
\end{thm}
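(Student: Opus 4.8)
The plan is to read off $c_3<1$ directly from Theorem \ref{3split} by specializing it to finite groups. A finite group $H$ is a compact group under the discrete topology; every automorphism of $H$ is then automatically continuous, and the normalized Haar measure $\mathbf{m}_H$ coincides with the counting measure normalized by $|H|$, so that $\mathbf{m}_H(A)=\frac{|A|}{|H|}$ for every subset $A\subseteq H$. In particular, if $\phi\in{\rm Aut}(H)$ satisfies $\phi^3={\rm id}$, then, taking $G=H$ and $\alpha=\phi$ in Theorem \ref{3split}, the set $X=\{x\in H \;|\; xx^\phi x^{\phi^2}=1\}$ is exactly $X_{3,\phi}(H)$, and its measure $\mathbf{m}_H(X)$ is precisely the ratio $\frac{|X_{3,\phi}(H)|}{|H|}$ occurring in the definition of $c_3$.

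With this identification in hand, I would invoke Theorem \ref{3split} as a threshold statement: whenever $\frac{|X_{3,\phi}(H)|}{|H|}>\frac{15}{16}$, the theorem forces $X_{3,\phi}(H)=H$, i.e.\ the ratio equals $1$. Taking the contrapositive, every ratio $\frac{|X_{3,\phi}(H)|}{|H|}$ that is \emph{different} from $1$ must satisfy $\frac{|X_{3,\phi}(H)|}{|H|}\leq\frac{15}{16}$. Since the supremum defining $c_3$ is formed only after deleting the value $1$ from the set of ratios, this immediately yields $c_3\leq\frac{15}{16}<1$. Replacing the threshold $\frac{15}{16}$ by the sharper $\frac{7}{8}$ from Remark \ref{rm1} gives the stronger bound $c_3\leq\frac{7}{8}$, which is consistent with the label of the statement.

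I expect essentially no obstacle here: all of the genuine analytic and nilpotency arguments have already been carried out in the proof of Theorem \ref{3split}, so the present statement is a purely formal consequence. The only items that require checking are the standard facts recalled above, namely that the discrete topology renders $H$ compact with all automorphisms continuous, and that the normalized Haar measure of a finite group is the uniform counting measure; both are routine. One should merely take care that the condition $\phi^3={\rm id}$ used to define $c_3$ matches the hypothesis $\alpha^3={\rm id}$ of Theorem \ref{3split}, which it does, so the application is legitimate for every admissible pair $(H,\phi)$, including the degenerate case $\phi={\rm id}$.
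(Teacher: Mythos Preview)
Your proposal is correct and is exactly the argument the paper intends: the paper's own proof is the single line ``It follows from Theorem \ref{3split}'', and you have merely spelled out why, by specializing that theorem to finite (hence compact) groups and reading off the bound $c_3\leq\frac{15}{16}<1$.
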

\begin{proof}
It follows from Theorem \ref{3split}.	
\end{proof}

We now see that  Conjecture \ref{l-pcon} of  L\'evai  and  Pyber is true for $n=3$.

\begin{thm}
	Let $G$ be a profinite group such that the equation $x^3=1$ holds on a set with positive Haar measure. 
	Then the solutions set of the equation $x^3=1$ has non-empty interior.
\end{thm}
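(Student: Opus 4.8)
The plan is to deduce this final theorem directly from the machinery already assembled, rather than reprove anything from scratch. The statement asserts that if $G$ is profinite and ${\mathbf m}_G(X_3(G))>0$, then $X_3(G)$ has nonempty interior; since $X_3(G)$ is closed (it is defined by the continuous word map $x\mapsto x^3$) and the profinite topology has a basis of cosets of open normal subgroups, ``nonempty interior'' is equivalent to the conclusion of Conjecture~\ref{l-pcon}, namely that some coset $tH$ with $H$ open lies inside $X_3(G)$. So the theorem is exactly the $n=3$ instance of Conjecture~\ref{l-pcon}, and the real work is to discharge its hypothesis.

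First I would invoke Theorem~\ref{c3<7/8}, which states $c_3<1$, i.e.\ Conjecture~\ref{sup<1} holds for $n=3$. Then I would feed this into Proposition~\ref{cj <- cj}, which proves that Conjecture~\ref{sup<1} implies Conjecture~\ref{l-pcon}. Tracing that implication for the case at hand: given $G$ profinite with ${\mathbf m}_G(X_3(G))>0$, Lemma~\ref{sup} produces a normal open subgroup $M$ and an element $g\in X_3(G)$ with
\begin{align*}
	c_3<\frac{{\mathbf m}_G\pr{Mg\cap X_3(G)}}{{\mathbf m}_G(M)}.
\end{align*}
Setting $\phi\colon x\mapsto gxg^{-1}$ on $M$, one has $\phi^3={\rm id}$ because $g^3=1$, and $M\cap X_3(G)g=X_{3,\phi}(M)$, so the displayed inequality reads ${\mathbf m}_M(X_{3,\phi}(M))>c_3$. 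Proposition~\ref{**} (applied with $\mathcal{X}$ the class of all finite groups, using $c_3^{\mathcal{X}}=c_3<1$) then forces $X_{3,\phi}(M)=M$, whence $Mg\subseteq X_3(G)$.

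That last inclusion is precisely what is needed: $Mg$ is an open coset contained in $X_3(G)$, so $X_3(G)$ has nonempty interior, establishing the theorem. The only point that requires a word of care is the translation between the two phrasings of the conclusion --- I would remark that $X_3(G)$ being closed together with the existence of the open coset $Mg\subseteq X_3(G)$ gives nonempty interior, and conversely any interior point sits in such a coset.

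I expect no genuine obstacle here, since all the structural effort has already been spent: the hard analytic input is Theorem~\ref{3split} (the $\tfrac{15}{16}$ measure threshold forcing $X=G$ via the $2$-Engel argument and the weight-$3$ commutator manipulations), and the hard reduction is the equivalence package of Propositions~\ref{cj <- cj} and~\ref{**} together with Lemma~\ref{sup}. The only thing one must be slightly attentive to is verifying that the hypotheses of Proposition~\ref{**} are met, namely that $\phi$ is continuous and that $N^\phi\subseteq N$ for the relevant open normal $N\le M$; continuity of the inner automorphism by $g$ is automatic, and the invariance condition is handled exactly as in the proof of Proposition~\ref{cj <- cj}, so the argument goes through verbatim with $n=3$.
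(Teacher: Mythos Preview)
Your proposal is correct and follows exactly the paper's own approach: the paper's proof simply says ``It follows from Theorem \ref{c3<7/8} and Proposition \ref{cj <- cj}.'' You have merely unpacked the content of Proposition \ref{cj <- cj} for $n=3$ and added the (straightforward) remark translating between ``nonempty interior'' and ``contains an open coset,'' but the argument is the same.
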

\begin{proof}
It follows from Theorem \ref{c3<7/8} and Proposition \ref{cj <- cj}, 	
\end{proof}

\end{document}